\documentclass[reqno]{amsart}

\oddsidemargin 0cm \evensidemargin 0cm \topmargin -0.5cm
\textwidth 15cm \textheight 21cm

\usepackage{times}
\usepackage{mathrsfs}
\usepackage{amsmath}
\usepackage{amsthm}
\usepackage{amsfonts}

\newtheorem{thm}{Theorem}[section]
\newtheorem{lem}[thm]{Lemma}

\newtheorem{rem}[thm]{Remark}

\DeclareMathAlphabet{\mathpzc}{OT1}{pzc}{m}{it}

\numberwithin{equation}{section}

\newcommand{\Wqb}{W_{q,D}}
\newcommand{\R}{\mathbb{R}}

\newcommand{\A}{\mathbb{A}}

\newcommand{\E}{\mathbb{E}}

\newcommand{\ml}{\mathcal{L}}

\newcommand{\Om}{\Omega}

\newcommand{\ve}{\varepsilon}
\newcommand{\rd}{\mathrm{d}}

\newcommand{\bqn}{\begin{equation}}
\newcommand{\eqn}{\end{equation}}
\newcommand{\bqnn}{\begin{equation*}}
\newcommand{\eqnn}{\end{equation*}}
\newcommand{\bear}{\begin{eqnarray}} 
\newcommand{\eear}{\end{eqnarray}} 
\newcommand{\bean}{\begin{eqnarray*}} 
\newcommand{\eean}{\end{eqnarray*}} 
\newcommand{\bs}{\begin{split}}
\newcommand{\es}{\end{split}}

\newcommand{\dhr}{\mathrel{\lhook\joinrel\relbar\kern-.8ex\joinrel\lhook\joinrel\rightarrow}}

\title[Coexistence Steady States in a Predator-Prey Model]
{Coexistence Steady States in a Predator-Prey Model}

\author[Ch. Walker]{Christoph Walker}

\address{Leibniz Universit\"at Hannover, Institut f\"ur Angewandte Mathematik, Welfengarten 1, D--30167 Hannover, Germany.}
\email{walker@ifam.uni-hannover.de}

\begin{document}

\begin{abstract}
An age-structured predator-prey system with diffusion and Holling-Tanner-type nonlinearities is considered. Regarding the intensity of the fertility of the predator as bifurcation parameter, we prove that a branch of positive coexistence steady states bifurcates from the marginal steady state with no prey. A similar result is obtained when the fertility of the prey varies.
\end{abstract}

\keywords{Age structure, diffusion, population model, bifurcation, steady states.
\\
{\it Mathematics Subject Classifications (2000)}: 35K55, 35B32, 92D25.}

\maketitle

\section{Introduction}
We consider the situation that an age-structured prey population and an age-structured predator population inhabit the same region. If $u=u(t,a,x)\ge 0$ and $v=v(t,a,x)\ge 0$ are respectively the density functions of the prey and predator at time $t\ge 0$, age $a\in [0,a_m)$, and spatial position $x\in\Om$, a general model of equations governing the time evolution reads
\begin{align*}
\partial_tu+\partial_a u-d_1\Delta_x u&=-\mu_1(a,u,v)u\ ,& t>0\ ,\quad a\in (0,a_m)\ ,\quad x\in\Om\ ,\\
u(t,0,x)&=\int_0^{a_m}b_1(a,u,v)\, u(t,a,x)\rd a\ , & t>0\ ,\quad x\in\Om\ ,\\
\partial_tv+\partial_a v-d_2\Delta_x v&=-\mu_2(a,u,v)v\ ,& t>0\ ,\quad a\in (0,a_m)\ ,\quad x\in\Om\ ,\\
v(t,0,x)&=\int_0^{a_m}b_2(a,u,v)\, v(t,a,x)\rd a\ , & t>0\ ,\quad x\in\Om\ ,
\end{align*}
subject to some suitable boundary conditions on the boundary $\partial\Om$. Here, $\mu_j$ and $b_j$ are respectively the death and birth rates depending nonlinearly on the predator $v$ and on the prey $u$, $\Om\subset\R^n$ is a bounded and smooth domain, and $a_m\in (0,\infty]$ is the maximal age (that could be the different for the two populations). In this paper, however, we shall focus on steady state solutions, that is, on time-independent solutions $u=u(a,x)\ge 0$ and $v=v(a,x)\ge 0$, for a particular case of the previous equations. More precisely, we look for nonnegative solutions $(u,v)$ to the parameter-dependent system
\begin{align}
\partial_a u-\Delta_x u&=-\alpha_1Uu-\alpha_2\frac{Vu}{1+mU}\ ,&  a\in (0,\infty)\ ,\quad x\in\Om\ ,\label{4a}\\
u(0,x)&=\eta U(x)\ , &  x\in\Om\ ,\label{4b}\\
\partial_a v-\Delta_x v&=-\beta_1Vv+\beta_2\frac{Uv}{1+mU}\ ,&  a\in (0,\infty)\ ,\quad x\in\Om\ ,\label{5a}\\
v(0,x)&=\xi V(x)\ , &  x\in\Om\ ,\label{5b}
\end{align}
where
\bqn\label{3}
U:=\int_0^\infty e^{-ra}\, u(a,\cdot)\,\rd a\ ,\qquad  V:=\int_0^\infty e^{-sa}\,v(a,\cdot)\,\rd a\ .
\eqn
Equations \eqref{4a}, \eqref{5a} are supplemented with Dirichlet boundary conditions, i.e. $u\vert_{\partial\Om}=0$ and $v\vert_{\partial\Om}=0$. The latter system is derived from the previous one by taking $a_m:=\infty$, by normalizing the diffusivities $d_1, d_2$ to 1 for the sake of readability, by considering linear birth rates of the form 
$$
b_1(a):=\eta e^{-ra}\ ,\qquad b_2(a):=\xi e^{-sa}\ ,
$$ 
where $\eta>0$ and $\xi>0$ are parameters measuring the intensity of the fertility and $r,s>0$ are weights for the loss of fertility with increasing age, and by taking
nonlinear mortality rates of the form
$$
\mu_1(u,v):=\alpha_1U+\alpha_2\frac{V}{1+mU}\ ,\qquad \mu_2(u,v):=\beta_1V-\beta_2\frac{U}{1+mU}
$$
with some fixed constants $\alpha_1,\alpha_2,\beta_1,\beta_2,m>0$. 

Clearly, other boundary conditions, e.g. of Neumann type, can be considered as well. We point out that equations \eqref{4a}-\eqref{5b} are nonlocal with respect to age due to the nonlinear terms involving $U$ and $V$ given in \eqref{3}. In addition, the initial values depend on the entire solution.

A formal integration of the parabolic system \eqref{4a}-\eqref{3} yields a nonlinear elliptic system for $(U,V)$:
\begin{align}
-\Delta_x U&=(\eta-r)U-\alpha_1U^2-\alpha_2\frac{VU}{1+mU}\ ,\quad  x\in\Om\ ,\qquad U\vert_{\partial\Om}=0\ ,\label{6}\\
-\Delta_x V&=(\xi-s)V-\beta_1V^2+\beta_2\frac{UV}{1+mU}\ ,\quad   x\in\Om\ ,\qquad V\vert_{\partial\Om}=0\ .\label{7}
\end{align}
Note that with time dependence in \eqref{6}, \eqref{7} (and also in \eqref{4a}-\eqref{5b}), in the absence of the other specie and of diffusion, both species would grow logistically. The additional nonlinear coupling terms are referred to as Holling-Tanner reaction terms and represent, e.g. in \eqref{6} and \eqref{4a}, the rate at which the prey is consumed by the predator. This rate is finite even if the prey tends to infinity, i.e. reaction terms of Holling-Tanner type model e.g. finite appetite of the predator.

System \eqref{6}, \eqref{7} is investigated in \cite{BlatBrown} and global bifurcation results are shown with respect to the parameters $\eta-r$ and $\xi-s$. The goal of this paper is to show similar -- though local -- bifurcation results with respect to the parameters $\eta$ and $\xi$ for the parabolic system \eqref{4a}-\eqref{5b} in the spirit of \cite{BlatBrown}. We also refer to \cite{DelgadoEtAl2}, where a variant of \eqref{4a}-\eqref{5b} with only one equation is studied.

Obviously, independent of what the parameters $\eta$ and $\xi$ are, equations \eqref{4a}-\eqref{5b} always possess the trivial solution $(u,v)\equiv (0,0)$. Moreover, it follows from \cite{WalkerSIMA} that \eqref{4a}, \eqref{4b} with $V\equiv 0$ have nontrivial nonnegative solutions $u\not\equiv 0$ provided the parameter $\eta$ is suitable. Analogously, \eqref{5a}, \eqref{5b} with $U\equiv 0$ admit nontrivial nonnegative solutions $v\not\equiv 0$ for some values of $\xi$. In this paper we shall prove that, in addition, there are nonnegative coexistence steady states $(u_*,v_*)$ with $u_*\not\equiv 0$ and $v_*\not\equiv 0$ for some parameter values of $\eta$ and $\xi$. Roughly speaking, if $\xi$ is regarded as bifurcation parameter and $(\eta,u_\eta)$ is a fixed nontrivial and nonnegative solution (i.e. $u_\eta\not\equiv 0$) to \eqref{4a}, \eqref{4b} with $V\equiv 0$, then there is a critical value $\xi_0=\xi_0(\eta)$ such that a branch of nonnegative solutions $(\xi,u_*,v_*)$ to \eqref{4a}-\eqref{5b} with $u_*\not\equiv 0$ and $v_*\not\equiv 0$ bifurcates locally from the semi-trivial branch $\{(\xi,u_\eta,0);\xi\ge 0\}$ at the point $(\xi_0,u_\eta,0)$ provided that $\beta_2<<m$. This bifurcation is supercritical. We refer to Theorem~\ref{T1} for details. Conversely, if $\eta$ is regarded as bifurcation parameter, then a similar result can be derived without additional assumptions on the coefficients. The precise statement for this case is given in Theorem~\ref{T2}. 

In the next section we prove Theorem~\ref{T1} in detail using the theorem of Crandall-Rabinowitz \cite{CrandallRabinowitz}. The proof of Theorem~\ref{T2} is basically the same and will thus merely be sketched.

\section{Nontrivial Coexistence Steady States}

If $E$ and $F$ are Banach spaces we write $\ml(E,F)$ for the space of all bounded linear operators from $E$ to $F$, and we set $\ml(E):=\ml(E,E)$. 

We begin with some preliminary investigations. Fix $q\in (n+2,\infty)$ and let $$
\Wqb^\kappa:=\Wqb^\kappa(\Om):=\{u\in W_q^\kappa; u=0\ \text{on}\ \partial\Om\}
$$
denote the Sobolev-Slobodeckii spaces on $\Om$ involving Dirichlet boundary conditions for $\kappa>1/q$, where values on the boundary are interpreted in the sense of traces. Then $\Wqb^{2-2/q}\hookrightarrow C^1(\bar{\Om})$ by the Sobolev embedding theorem, in particular the interior of the positive cone  $\Wqb^{2-2/q}\cap L_q^+$ is nonempty. Set \mbox{$L_q:=L_q(\Om)$} and 
$$
\E_0:=L_q(\R^+,L_q)\ ,\quad \E_1:=L_q(\R^+,\Wqb^2)\cap W_q^1(\R^+,L_q)\ .
$$ 
For the positive cone of $\E_1$ we write $\E_1^+:=\E_1\cap L_q(\R^+,L_q^+)$. Recall that 
\bqn\label{emb}
\E_1\hookrightarrow BUC\big(\R^+,\Wqb^{2-2/q}\big)\hookrightarrow BUC\big(\R^+,C^1(\bar{\Om})\big)
\eqn 
according to \cite[III.Thm.4.10.2]{LQPP}. Hence the trace $\gamma_0u:=u(0)$ defines an operator \mbox{$\gamma_0\in\ml (\E_1,\Wqb^{2-2/q})$}. We then say that an operator $A\in\ml (\Wqb^2,L_q)$ has {\it maximal $L_q$-regularity on $\R^+$} provided that $$(\partial_a +A,\gamma_0)\in\ml (\E_1,\E_0\times \Wqb^{2-2/q})$$ is a toplinear isomorphism. 

Obviously, if $u \in\E_1$ and $\tau>0$, then $\int_0^\infty e^{-\tau a}\, u(a)\rd a\in\Wqb^2$ and, by \eqref{emb},
$$
\int_0^\infty e^{-\tau a}\, \partial_ a u(a)\,\rd a=-u(0)+\tau\int_0^\infty e^{-\tau a}\, u(a)\,\rd a\quad\text{in}\quad L_q\ .
$$
Throughout this paper we agree upon the notation \eqref{3} for $U$ and $V$ if $u,v\in\E_1$. 

We write $-\Delta_D$ for the Laplace operator subject to Dirichlet boundary conditions. It is known (e.g. \cite[Thm.12]{AmannPrinc}) that if $p\in L_\infty(\Om)$, then the eigenvalue problem
$$
-\Delta_D\varphi+ p\varphi=\lambda\varphi\ ,
$$
has a smallest eigenvalue $\lambda=\lambda_1(p)$ with a strongly positive eigenfunction. This principal eigenvalue $\lambda_1(p)$ is simple and increasing in $p$ \cite[Thm.16]{AmannPrinc}. We set $\lambda_1:=\lambda_1(0)>0$ and let $\varphi_1$ denote a strongly positive eigenfunction corresponding to $\lambda_1$.

The next lemma was noted in \cite{BlatBrown}.

\begin{lem}\label{L1}
Let $(u,v)$ be a nonnegative smooth solution to \eqref{4a}-\eqref{5b}. If $u\not\equiv 0$, then $\eta> \lambda_1+r$, and if $v\not\equiv 0$, then $\xi> \lambda_1+s-\beta_2/m$.
\end{lem}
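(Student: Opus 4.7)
The strategy is to exploit the semi-elliptic reformulation \eqref{6}--\eqref{7} already indicated in the excerpt and then to test the resulting equations against the principal Dirichlet eigenfunction $\varphi_1$ associated with $\lambda_1=\lambda_1(0)$. This is the standard device for producing lower bounds on principal-eigenvalue-type parameters in semilinear elliptic problems with logistic nonlinearities.

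First I would observe that any nonnegative smooth solution $(u,v)$ of \eqref{4a}--\eqref{5b} gives, via \eqref{3}, a nonnegative pair $(U,V)\in\Wqb^2\times\Wqb^2$ solving \eqref{6}--\eqref{7}: multiplying \eqref{4a} by $e^{-ra}$ and integrating in age, the identity
\begin{equation*}
\int_0^\infty e^{-ra}\,\partial_a u\,\rd a = -u(0,\cdot)+rU = (r-\eta)U
\end{equation*}
recorded in the excerpt, together with the fact that $U$ and $V$ are independent of $a$ and so pull out of the right-hand side, reduces \eqref{4a}, \eqref{4b} to \eqref{6}; the derivation of \eqref{7} is analogous. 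Since $u\ge 0$, the assumption $u\not\equiv 0$ forces $U\not\equiv 0$, and likewise $v\not\equiv 0$ forces $V\not\equiv 0$.

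For the bound on $\eta$ I would test \eqref{6} against $\varphi_1$ in $L_2(\Om)$, integrate by parts (both $U$ and $\varphi_1$ vanish on $\partial\Om$) to move the Laplacian onto $\varphi_1$, and rearrange to
\begin{equation*}
(\eta-r-\lambda_1)\int_\Om U\varphi_1\,\rd x \;=\; \alpha_1\int_\Om U^2\varphi_1\,\rd x + \alpha_2\int_\Om\frac{VU\varphi_1}{1+mU}\,\rd x .
\end{equation*}
Since $\varphi_1>0$ on $\Om$ and $U,V\ge 0$ with $U\not\equiv 0$, both integrals on the right are nonnegative and the first is strictly positive, while $\int_\Om U\varphi_1\,\rd x>0$; hence the coefficient on the left must be strictly positive, i.e.\ $\eta>\lambda_1+r$.

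The bound on $\xi$ is obtained in the same manner from \eqref{7}, with one twist: the Holling--Tanner cross term now enters with a positive sign, so I would absorb it using the saturation estimate $U/(1+mU)\le 1/m$ valid for $U\ge 0$. Combined with the strict positivity of the logistic contribution $\beta_1\int_\Om V^2\varphi_1\,\rd x$ (which is guaranteed by $V\not\equiv 0$), this yields $\xi-s-\lambda_1+\beta_2/m>0$. This absorption step is the only point of the argument requiring any care; everything else is routine bookkeeping and uses only $\alpha_i,\beta_i,m>0$ and the strict positivity of $\varphi_1$ inside $\Om$.
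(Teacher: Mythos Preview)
Your argument is correct. The difference from the paper's proof is one of organisation rather than substance: the paper works directly with the age-structured equations \eqref{4a}--\eqref{5b}. It sets $z(a):=\int_\Om\varphi_1 u(a)\,\rd x$, reads off from $\partial_a u-\Delta_D u\le 0$ the differential inequality $z'\le -\lambda_1 z$, integrates to $z(a)\le z(0)e^{-\lambda_1 a}$, and then feeds this into the age-boundary condition $z(0)=\eta\int_0^\infty e^{-ra}z(a)\,\rd a$ to obtain $z(0)<\eta(\lambda_1+r)^{-1}z(0)$; for $v$ the same scheme is run with $\partial_a v-\Delta_D v\le(\beta_2/m)v$. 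You instead pass first to the elliptic reductions \eqref{6}--\eqref{7} and then test against $\varphi_1$, which is exactly the strategy of the reference~\cite{BlatBrown} that the paper cites for this lemma. Your route has the advantage that the strict inequality is transparent (it comes from the strictly positive logistic term $\alpha_1\int_\Om U^2\varphi_1\,\rd x$, respectively $\beta_1\int_\Om V^2\varphi_1\,\rd x$), whereas in the paper's computation the strictness is slightly hidden. Conversely, the paper's ODE-in-$a$ viewpoint avoids having to justify the passage from the parabolic to the elliptic system and stays entirely at the level of the original unknowns; it is also the template reused later in the proof of Lemma~\ref{L2}.
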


\begin{proof}
Let $u\not\equiv 0$ and set $z(a):=\int_\Om \varphi_1 u(a)\rd x$. Then, since $\partial_a u-\Delta_D u\le 0$, we have $z'\le-\lambda_1 z$, i.e. $z(a)\le z(0)e^{-\lambda_1 a}$. Hence
$$
0\not= z(0)=\int_\Om\varphi_1\eta\int_0^\infty e^{-ra} u(a)\,\rd a\rd x <\dfrac{\eta}{\lambda_1+r} z(0)
$$
implies the first assertion. For the second claim let $v\not\equiv 0$ and set $w(a):=\int_\Om \varphi_1v(a)\rd x$. Then we obtain from
$\partial_a v-\Delta_D v\le \frac{\beta_2}{m}v$ that $w'\le(-\lambda_1 +\frac{\beta_2}{m})w$ and we conclude as before.
\end{proof}

Next, set $$\A_1(u):=-\Delta_D+\alpha_1U\quad\text{and}\quad \A_2(v):=-\Delta_D+\beta_1V$$ for $u,v\in\E_1$. Clearly, $\A_j\in C^1(\E_1,\ml(\Wqb^2,L_q))$ and $-\A_j(u)$ generates for each $u\in\E_1$ a strongly positive analytic semigroup $\{e^{-\A_j(u)a};a\ge 0\}$ on $L_q$. Moreover, $\A_j(0)=-\Delta_D$ has maximal $L_q$-regularity on $\R^+$ (e.g., see \cite[III.Ex.4.7.3,III.Thm.4.10.7]{LQPP}). We thus may apply the result of \cite{WalkerSIMA} to obtain semi-trivial branches of solutions to \eqref{4a}-\eqref{5b}, i.e. nontrivial solutions $(\xi,\eta,u,v)$ with either $u\equiv 0$ or $v\equiv 0$. In fact, we have:

\begin{lem}\label{L2}
(a) There are $\ve_0>0$ and a branch of nonnegative solutions $(\xi,v)$ to \eqref{5a}, \eqref{5b} with $U\equiv 0$ of the form
$$\mathcal{V}:=\{(\xi,v_\xi);\lambda_1+s<\xi<\lambda_1+s+\ve_0\}\subset\R^+\times\E_1^+$$ with $v_\xi\not\equiv 0$ bifurcating from the critical point $(\xi,v)=(\lambda_1+s,0)$.

(b) There are $\ve_0'>0$ and a branch of nonnegative solutions $(\eta,u)$ to \eqref{4a}, \eqref{4b} with $V\equiv 0$ of the form
$$\mathcal{U}:=\{(\eta,u_\eta);\lambda_1+r<\eta<\lambda_1+r+\ve_0'\}\subset\R^+\times\E_1^+$$ with $u_\eta\not\equiv 0$ bifurcating from the critical point $(\eta,u)=(\lambda_1+r,0)$.
\end{lem}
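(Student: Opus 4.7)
The plan is to apply the abstract Crandall--Rabinowitz-type bifurcation theorem for age-structured problems from \cite{WalkerSIMA} to each of the two decoupled semi-trivial subsystems. Since the statements (a) and (b) are entirely symmetric, I would prove (a) in detail and obtain (b) by interchanging the roles of the two populations and of the parameter triples $(\alpha_1,\eta,r)$ and $(\beta_1,\xi,s)$.

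Setting $U\equiv 0$ in \eqref{5a}--\eqref{5b} decouples the predator equation into a single age-structured logistic problem $\partial_a v+\A_2(v)v=0$ with $\gamma_0 v=\xi V$, where $V=\int_0^\infty e^{-sa}v(a)\,\rd a$. I would view this as a nonlinear operator equation $F(\xi,v)=0$ on $\R\times\E_1$, where the semi-trivial branch $F(\xi,0)\equiv 0$ is obvious. The bifurcation values are located by linearizing at $v=0$: solving $\partial_a v-\Delta_D v=0$ with $v(0)=\xi V$ gives $v(a)=e^{a\Delta_D}v(0)$, whence $V=(sI-\Delta_D)^{-1}v(0)$, and the trace condition collapses to the eigenvalue problem $-\Delta_D v(0)=(\xi-s)v(0)$. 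The smallest $\xi$ admitting a positive eigenfunction is therefore $\xi_0:=\lambda_1+s$, with one-dimensional kernel spanned by $a\mapsto e^{a\Delta_D}\varphi_1$.

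I would then verify the Crandall--Rabinowitz hypotheses within the functional setting recalled earlier: maximal $L_q$-regularity of $-\Delta_D$ on $\R^+$ ensures that $\partial_v F(\xi_0,0)\in\ml(\E_1,\E_0\times\Wqb^{2-2/q})$ is Fredholm of index zero; simplicity of $\lambda_1$ gives a one-dimensional kernel; and transversality reduces to the spectral observation that $\xi\mapsto\lambda_1-\xi+s$ has nonzero $\xi$-derivative at $\xi_0$, which is immediate. The theorem then produces a smooth local curve $\{(\xi,v_\xi):|\xi-\xi_0|<\ve_0\}$ through $(\xi_0,0)$. Positivity $v_\xi\in\E_1^+$ for $\xi$ near $\xi_0$ follows because the tangent direction $e^{a\Delta_D}\varphi_1$ lies in the interior of $\E_1^+$ by strong positivity of the Dirichlet heat semigroup, and one-sidedness $\xi>\xi_0$ follows from (a slight variant of) Lemma \ref{L1}: when $U\equiv 0$ the term $\beta_2/m$ is absent, so the same integration against $\varphi_1$ yields $\xi>\lambda_1+s$ for any nontrivial nonnegative $v$. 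The main anticipated obstacle is the bookkeeping required to verify that the nonlocal-in-age term $V$ fits the abstract differentiability framework of \cite{WalkerSIMA}; since $V$ enters smoothly through an exponentially weighted integral, this verification is routine but somewhat tedious.
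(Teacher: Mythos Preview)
Your approach is essentially the paper's: both invoke the abstract result of \cite{WalkerSIMA} for the decoupled logistic subproblem, identify the bifurcation value $\xi_0=\lambda_1+s$ (the paper phrases this equivalently as $r(Q_0)^{-1}$ for the resolvent $Q_0=(s-\Delta_D)^{-1}$ via Krein--Rutman, which is your eigenvalue reduction in different clothing), and deduce supercriticality from the Lemma~\ref{L1}-type inequality.

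One point to tighten: your positivity argument claims that the tangent direction $a\mapsto e^{a\Delta_D}\varphi_1$ lies in the interior of $\E_1^+$, but this cone has empty interior --- by \eqref{emb} any $g\in\E_1$ with small $\E_1$-norm is uniformly small in $C^1(\bar\Om)$, yet $e^{a\Delta_D}\varphi_1\to 0$ in $C^1$ as $a\to\infty$, so small perturbations can produce sign changes for large $a$. The correct route (used implicitly via \cite[Prop.~2.8]{WalkerSIMA}, and explicitly later in the proof of Theorem~\ref{T1}) is to argue positivity at the trace: $v_\xi(0)=\ve\big(\xi_0\varphi_1+\gamma_0\Theta(\ve)\big)$ lies in the interior of the positive cone of $\Wqb^{2-2/q}$ for small $\ve>0$, and then the strong positivity of the semigroup generated by $\Delta_D-\beta_1 V_\xi$ propagates nonnegativity to all $a>0$.
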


\begin{proof}
Of course, the proof of (a) and (b) is the same. We take $U\equiv 0$ in \eqref{5a} and apply \cite[Thm.2.4,Prop.2.8]{WalkerSIMA} to \eqref{5a}, \eqref{5b}, where we regard $\xi$ as bifurcation parameter. Observing that the compact and strongly positive operator $Q_0$ introduced in \cite{WalkerSIMA} is simply the resolvent
$$
Q_0:=\int_0^\infty e^{-sa}\,e^{\Delta_D a}\,\rd a=(s-\Delta_D)^{-1}\ ,
$$
we have 
$Q_0\varphi_1=
(s+\lambda_1)^{-1}\varphi_1$.
Hence, the spectral radius of $Q_0$ is $r(Q_0)=(s+\lambda_1)^{-1}$ since this is the only eigenvalue with a positive eigenfunction according to the Krein-Rutman theorem, and the existence of such a branch follows. Arguments similar to the proof of Lemma~\ref{L1} show that $\xi>\lambda_1+s$ for any nonnegative solution $(\xi,v)$ and so supercritical bifurcation occurs.
\end{proof}

Standard regularity theory for semilinear parabolic equations implies that the solutions of \eqref{4a}-\eqref{5b} established in Lemma~\ref{L2} are classical solutions, i.e. belong to $C(\R^+\times\bar{\Om})\cap C^{1,2}((0,\infty)\times\bar{\Om})$.

\subsection{Bifurcation for the Parameter $\xi$}

We first regard $\xi$ as bifurcation parameter and keep $\eta$ fixed. If $\eta\le \lambda_1+r$, then there is a trivial branch $\{(\xi,0,0);\xi\ge 0\}$ and a semi-trivial branch
$$
\mathfrak{C}:=\{(\xi,0,v_\xi)\,;\,\lambda_1+s<\xi<\lambda_1+s+\ve_0\}\subset\R^+\times\E_1^+\times\E_1^+
$$
of solutions $(\xi,u,v)$ to \eqref{4a}-\eqref{5b} provided by Lemma~\ref{L2}. If $\eta\in (\lambda_1+r,\lambda_1+r+\ve_0')$ and $(\eta,u_\eta)\in\mathcal{U}$, then Lemma~\ref{L2} ensures in addition the existence of another semi-trivial branch
$$
\mathfrak{C}_\eta:=\{(\xi,u_\eta,0)\,;\,\xi\ge 0\}\subset\R^+\times\E_1^+\times\E_1^+\ .
$$
Our aim is to show that under certain assumptions on the coefficients in \eqref{4a}-\eqref{5b}, a branch of positive coexistence steady states bifurcates from the branch $\mathfrak{C}_\eta$.

For the remainder of this subsection we fix $(\eta,u_\eta)\in\mathcal{U}$ and set $U_\eta:=\int_0^\infty e^{-ra}\,u_\eta(a)\rd a$. Note that 
$$
u_\eta(a)=\eta e^{(\Delta_D-\alpha_1U_\eta)a}U_\eta\ ,\quad a\ge 0\ ,
$$
and
\bqn\label{16}
-\Delta_DU_\eta=(\eta-r)U_\eta-\alpha_1U_\eta^2\ .
\eqn
The strong positivity of $e^{(\Delta_D-\alpha_1U_\eta)a}$ ensures $u_\eta(a)>0$ in $\Om$ for $a>0$, and $U_\eta$ is strongly positive. 
To shorten notation we set 
$$
p_\eta:=\dfrac{u_\eta}{1+mU_\eta}\quad\text{and}\quad P_\eta:=\dfrac{U_\eta}{1+mU_\eta}\ .
$$

\begin{lem}\label{L3}
We have $0<U_\eta(x)\le\dfrac{\eta-r}{\alpha_1}$ for $x\in\Om$. If $(\xi,u,v)$ is a nonnegative solution to \eqref{4a}-\eqref{5b}, then $0\le U(x)\le U_\eta(x)$ for $x\in\Om$. If $v\not\equiv 0$, then $\xi>\xi_0(\eta)$, where $\xi_0(\eta)$ is the principal eigenvalue of $-\Delta_D+s-\beta_2P_\eta$.
\end{lem}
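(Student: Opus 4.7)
The plan is to integrate the age equations to replace the system \eqref{4a}--\eqref{5b} by an elliptic system for $(U,V)$, and then to exploit the three interlinked tools: (i) the weak maximum principle, (ii) monotonicity of the principal Dirichlet eigenvalue in the potential, and (iii) the fact that the function $t\mapsto t/(1+mt)$ is increasing on $\R^+$. Multiplying \eqref{4a} by $e^{-ra}$ and integrating in $a$ — using the identity recorded just after \eqref{emb} together with the boundary condition \eqref{4b} — I recover the system \eqref{6}, \eqref{7} for $(U,V)$ with Dirichlet data. The same computation applied to $u_\eta$ alone yields \eqref{16}.

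For the first assertion, I would argue that at any interior maximum $x_0$ of $U_\eta$ one has $-\Delta U_\eta(x_0)\ge 0$, so \eqref{16} forces $U_\eta(x_0)\le (\eta-r)/\alpha_1$; since $U_\eta$ vanishes on $\partial\Om$ and is strongly positive inside, the bound holds pointwise on $\Om$. For the second assertion I would set $w:=U_\eta-U$. Subtracting the $U$-equation from \eqref{16} and rearranging gives
\[
-\Delta w+\big[\alpha_1(U+U_\eta)-(\eta-r)\big]w\ =\ \alpha_2\,\frac{VU}{1+mU}\ \ge\ 0,
\]
with $w=0$ on $\partial\Om$. The key observation is that $U_\eta$ itself is a strongly positive eigenfunction of $-\Delta_D+\alpha_1U_\eta-(\eta-r)$ with eigenvalue $0$, hence $\lambda_1(\alpha_1 U_\eta-(\eta-r))=0$; by strict monotonicity of $\lambda_1$ in the potential, the operator $-\Delta_D+\alpha_1(U+U_\eta)-(\eta-r)$ has strictly positive principal eigenvalue as soon as $U\not\equiv 0$ (the case $U\equiv 0$ is trivial). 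The weak maximum principle then forces $w\ge 0$, i.e.\ $U\le U_\eta$. The most delicate point is making this last maximum-principle step rigorous; I would do it by testing against $w^-$ on the set $\{w<0\}$ and invoking domain monotonicity of the first Dirichlet eigenvalue to reach a contradiction.

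For the third assertion, assume $v\not\equiv 0$, so $V$ is strongly positive in $\Om$. Rewriting \eqref{7} as
\[
\Big(-\Delta_D+s-\beta_2\frac{U}{1+mU}+\beta_1 V\Big)V\ =\ \xi V,
\]
I recognize $\xi$ as the principal eigenvalue of $-\Delta_D+s-\beta_2 U/(1+mU)+\beta_1 V$ (uniqueness of the eigenvalue admitting a positive eigenfunction, by Krein--Rutman). Since $V>0$ and $\beta_1>0$, strict monotonicity of $\lambda_1$ in the potential gives
\[
\xi\ >\ \lambda_1\!\left(s-\beta_2\frac{U}{1+mU}\right).
\]
Finally, from the already established bound $U\le U_\eta$ and the monotonicity of $t\mapsto t/(1+mt)$ I obtain $U/(1+mU)\le U_\eta/(1+mU_\eta)=P_\eta$; another application of the monotonicity of $\lambda_1$ (this time in the opposite direction, because the sign in front of $\beta_2$ is negative) yields $\lambda_1(s-\beta_2 U/(1+mU))\ge \lambda_1(s-\beta_2 P_\eta)=\xi_0(\eta)$, which combines with the previous strict inequality to give $\xi>\xi_0(\eta)$.
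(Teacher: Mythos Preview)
Your argument is correct and follows the same overall strategy as the paper: reduce to the elliptic system \eqref{6}, \eqref{7}, \eqref{16} and argue via comparison and principal-eigenvalue monotonicity. The technical realizations differ slightly from the paper's (very terse) proof. For $U\le U_\eta$ the paper simply observes that $U$ is a subsolution of \eqref{16} and invokes the sub/supersolution comparison for the logistic equation from \cite{BlatBrown}, whereas you derive the linear inequality for $w=U_\eta-U$ and use $\lambda_1\big(\alpha_1(U+U_\eta)-(\eta-r)\big)>0$ directly; your version is self-contained and does not rely on uniqueness of the positive solution to \eqref{16}. For $\xi>\xi_0(\eta)$ the paper multiplies the inequality $-\Delta_D V-\beta_2P_\eta V\le (\xi-s)V-\beta_1V^2$ by $V$, integrates, and uses the Rayleigh-quotient characterization of $\xi_0(\eta)-s$, while you instead recognize $\xi$ as the principal eigenvalue of $-\Delta_D+s-\beta_2 U/(1+mU)+\beta_1 V$ via Krein--Rutman and apply monotonicity of $\lambda_1$ in the potential twice. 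Both routes are standard and equivalent; yours avoids the $L_2$ variational structure, the paper's avoids appealing to uniqueness of the positive eigenfunction.
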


\begin{proof}
The statement follows from \eqref{6}, \eqref{7}, \eqref{16}, and \cite[Lem.2.3,Lem.2.5,Thm.4.1]{BlatBrown}. We thus omit details and only sketch the simple proofs. Since $(\eta-r)/\alpha_1$ is a supersolution and $U$ a subsolution of \eqref{16}, the first and the second assertion follow. For the last assertion one multiplies the inequality 
$$
-\Delta_DV-\beta_2P_\eta V\le (\xi-s)V-\beta_1V^2\ \ ,
$$   
by $V$, integrates over $\Om$, and uses the fact that $\xi_0(\eta)-s$ is the principal eigenvalue of $-\Delta_D-\beta_2 P_\eta$.
\end{proof}

Note that the statement about the restriction of $\xi$ in Lemma~\ref{L3} is more precise than in Lemma~\ref{L1} due to $\beta_2P_\eta\le\beta_2/m$ and the fact that the principal values thus satisfy $\lambda_1(-\beta_2P_\eta)\ge \lambda_1(-\beta_2/m)$.

For future purposes let us also state the following auxiliary result:

\begin{lem}\label{L4}
The operator $-\Delta_D+\alpha_1U_\eta$ has maximal $L_q$-regularity on $\R^+$. If 
\bqn\label{122}
\dfrac{\beta_2(\eta-r)}{\alpha_1+m(\eta-r)}\quad\text{is sufficiently small}
\eqn (e.g., if $\beta_2/m$ is small), then also
$-\Delta_D-\beta_2P_\eta$ has maximal $L_q$-regularity on $\R^+$.
\end{lem}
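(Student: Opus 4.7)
The strategy is to reduce both assertions to one principle: for any $p\in L_\infty(\Omega)$ with positive principal eigenvalue $\lambda_1(p)>0$, the operator $-\Delta_D+p$ with domain $\Wqb^2$ has maximal $L_q$-regularity on $\R^+$. Since $\Wqb^2\hookrightarrow C^1(\bar\Omega)$ gives $U_\eta\in C^1(\bar\Omega)$, both $\alpha_1U_\eta$ and $\beta_2 P_\eta$ lie in $L_\infty(\Omega)$, and multiplication by either is a bounded operator on $L_q$; so the entire content of the lemma reduces to checking the sign of $\lambda_1(p)$ in the two cases.

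For $p:=\alpha_1U_\eta\ge 0$, monotonicity of $\lambda_1$ in the potential (\cite[Thm.16]{AmannPrinc}) immediately yields $\lambda_1(\alpha_1U_\eta)\ge\lambda_1(0)=\lambda_1>0$, proving the first assertion. For $p:=-\beta_2P_\eta$ I would use the bound $0\le U_\eta\le(\eta-r)/\alpha_1$ from Lemma~\ref{L3} together with the fact that $t\mapsto t/(1+mt)$ is increasing on $[0,\infty)$ to obtain
$$
0\le P_\eta\le\frac{\eta-r}{\alpha_1+m(\eta-r)}\quad\text{in }\Omega,
$$
and then apply monotonicity of $\lambda_1$ once more to get
$$
\lambda_1(-\beta_2P_\eta)\ge \lambda_1-\frac{\beta_2(\eta-r)}{\alpha_1+m(\eta-r)}.
$$
The right-hand side is positive exactly when \eqref{122} is quantified as this expression being strictly smaller than $\lambda_1$; this is what \emph{sufficiently small} means, and it is satisfied a fortiori when $\beta_2/m$ is small.

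It remains to justify the general principle. I would argue as follows: $-\Delta_D$ has maximal $L_q$-regularity on $\R^+$ by \cite[III.Ex.4.7.3, III.Thm.4.10.7]{LQPP}, and multiplication by $p\in L_\infty(\Omega)$ belongs to $\ml(L_q)$, so bounded perturbation of generators gives maximal $L_q$-regularity of $-\Delta_D+p$ on every finite interval $[0,T]$. Moreover, $-\Delta_D+p$ has compact resolvent and its spectrum consists of eigenvalues whose smallest real part is $\lambda_1(p)>0$, so the generated analytic semigroup decays exponentially on $L_q$. Exponential stability together with finite-interval maximal regularity promote to maximal regularity on all of $\R^+$ via a standard patching argument that uses the exponential decay to secure the $L_q(\R^+,\cdot)$ norms in time. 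The main obstacle is exactly this last bookkeeping---verifying that the isomorphism $(\partial_a+A,\gamma_0)\colon\E_1\to\E_0\times\Wqb^{2-2/q}$ persists when one passes from $[0,T]$ to $\R^+$---but this is a soft and standard transition once exponential stability has been secured.
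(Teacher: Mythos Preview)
Your argument is correct, but it follows a genuinely different route from the paper's. The paper does not invoke a general ``spectral-bound-positive implies maximal regularity on $\R^+$'' principle. Instead it works through the machinery of bounded imaginary powers (BIP): for the first operator it appeals directly to \cite[III.Ex.4.7.3,\,III.Thm.4.10.7]{LQPP}, which cover second-order elliptic operators with nonnegative lower-order coefficients and give BIP with angle $<\pi/2$, hence maximal regularity on $\R^+$ via the Dore--Venni type theorem. For the second operator the paper observes that $-\Delta_D$ has BIP with angle $<\pi/2$, bounds $\|\beta_2P_\eta\|_\infty$ by the quotient in \eqref{122} (exactly your estimate on $P_\eta$), and then invokes the \emph{BIP perturbation theorem} \cite[III.Thm.4.8.7]{LQPP} to conclude that $-\Delta_D-\beta_2P_\eta$ still has BIP with angle $<\pi/2$ provided the perturbation is small enough; maximal regularity then follows from \cite[III.Thm.4.10.7]{LQPP} as before. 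The smallness in the paper is therefore the abstract threshold coming from that perturbation theorem, not the explicit bound $<\lambda_1$ that your argument produces.

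What each approach buys: the paper's proof is a clean chain of citations from \cite{LQPP} and needs no separate discussion of the passage from finite intervals to $\R^+$, since BIP with small angle gives maximal regularity on the half-line directly. Your approach is more self-contained in spirit and yields an explicit smallness threshold $\frac{\beta_2(\eta-r)}{\alpha_1+m(\eta-r)}<\lambda_1$, but it relies on two facts you only sketch: (i) bounded perturbations preserve maximal $L_q$-regularity on finite intervals, and (ii) finite-interval maximal regularity plus exponential stability of the semigroup promotes to maximal regularity on $\R^+$. Both are true and standard, but if you keep your version you should supply precise references rather than leaving the ``patching argument'' as an exercise.
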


\begin{proof}
Observing that $-\Delta_D+\alpha_1U_\eta$ has spectral bound not exceeding $-\lambda_1<0$ since $\alpha_1U_\eta$ is nonnegative, it follows from \cite[III.Ex.4.7.3,III.Thm.4.10.7]{LQPP} that $-\Delta_D+\alpha_1U_\eta$ has maximal $L_q$-regularity on $\R^+$. Analogously, due to $\lambda_1>0$ the Laplace operator $-\Delta_D$ has bounded imaginary power with power angle less than $\pi/2$ by \cite[III.Ex.4.7.3]{LQPP}. Moreover, using 
\bqn\label{P}
\left\|\beta_2P_\eta\right\|_\infty\le\frac{\beta_2(\eta-r)}{\alpha_1+m(\eta-r)} 
\eqn
by Lemma~\ref{L3}, we may invoke the perturbation theorem \cite[III.Thm.4.8.7]{LQPP} to conclude that $-\Delta_D-\beta_2P_\eta$ still has bounded imaginary power with power angle less than $\pi/2$ provided that the quotient on the right hand side of the previous inequality is sufficiently small. The assertion then follows again from
\cite[III.Thm.4.10.7]{LQPP}.
\end{proof}

Note that \cite[III.Thm.4.8.7]{LQPP} allows us in principle to compute the smallness condition in the statement of Lemma~\ref{L4} explicitly.
In the sequel we assume that $\beta_2(\eta-r)(\alpha_1+m(\eta-r))^{-1}$ is sufficiently small so that Lemma~\ref{L4} applies. In particular, we assume this number to be less than $\lambda_1+s$. Then $s-\beta_2P_\eta>-\lambda_1$ by \eqref{P} and thus
\bqn\label{A}
\xi_0:=\xi_0(\eta):=\lambda_1(s-\beta_2P_\eta)>0
\eqn
due to the monotonicity in $p$ of the principal eigenvalue $\lambda_1(p)$.

Suppose now that $(\xi,u,v)=(\xi,u_\eta-w,v)$ solves \eqref{4a}-\eqref{5b}. Then $(\xi,w,v)$ solves 
\begin{align}
\partial_a w-\Delta_D w&=-\alpha_1Wu_\eta-\alpha_1(U_\eta-W)w+\alpha_2\frac{V(u_\eta-w)}{1+m(U_\eta -W)}\ ,&w(0)=\eta W\ , \label{12}\\
\partial_a v-\Delta_D v&=-\beta_1Vv+\beta_2\frac{(U_\eta-W)v}{1+m(U_\eta-W)}\ ,&v(0)=\xi V\ , \label{13}
\end{align}
where
\bqnn
W:=\int_0^\infty e^{-ra}\, w(a)\,\rd a\ ,\qquad  V:=\int_0^\infty e^{-sa}\,v(a)\,\rd a\ .
\eqnn
Due to Lemma~\ref{L4} the operators
\begin{align*}
&Z_1:=\left(\partial_a-\Delta_D+\alpha_1U_\eta,\gamma_0\right)^{-1}\in\ml(\E_0\times \Wqb^{2-2/q},\E_1)\ ,\\
&Z_2:=\left(\partial_a-\Delta_D-\beta_2P_\eta,\gamma_0\right)^{-1}\in\ml(\E_0\times \Wqb^{2-2/q},\E_1)\ ,
\end{align*}
are well-defined. Hence, writing $\xi=\xi_0+t$, the solutions $(t,w,v)$ of \eqref{12}-\eqref{13} are the zeros of the function $F$ given by
$$
F(t,w,v):=\left(\begin{array}{c} w-Z_1\left(-\alpha_1W(u_\eta-w)+\frac{\alpha_2V(u_\eta-w)}{1+m(U_\eta -W)}\,,\,\eta W\right)\\
v-Z_2\left(-\beta_2P_\eta v-\beta_1Vv+\beta_2\frac{(U_\eta-W)v}{1+m(U_\eta-W)}\,,\, (\xi_0+t)V\right)\end{array}\right)\ .
$$
We validate the assumptions of the Crandall-Rabinowitz theorem \cite[Thm.1.7]{CrandallRabinowitz} for the function $F$. For $R>0$ sufficiently small set $\Sigma:=\mathbb{B}_{\E_1}(0,R)$ and note that
\bqnn\label{9}
\big[(u,v)\mapsto \frac{V}{1+mU}\big]\in C^1\big(\Sigma\times\Sigma,C(\bar{\Om})\big)\ ,
\eqnn
where we agree upon the notation \eqref{3}. Making $R>0$ smaller, if necessary, it readily follows that $F:\R\times\Sigma\times\Sigma\rightarrow\E_1\times\E_1$ has continuous partial Frech\'{e}t derivatives $F_t$, $F_{(w,v)}$, and $F_{t,(w,v)}$. Moreover, if $(\phi,\psi)\in\E_1\times\E_1$ and
\bqn\label{17}
\Phi:=\int_0^\infty e^{-ra}\, \phi(a)\,\rd a\ ,\qquad  \Psi:=\int_0^\infty e^{-sa}\,\psi(a)\,\rd a\ ,
\eqn
then the derivatives at $(t,w,v)=(0,0,0)$ are
\bqn\label{27}
F_{(w,v)}(0,0,0)[\phi,\psi]=\left(\begin{array}{c} \phi-Z_1\left(-\alpha_1\Phi u_\eta +\alpha_2\Psi p_\eta\,,\,\eta \Phi\right)\\
\psi-Z_2\left(0, \xi_0\Psi\right)\end{array}\right)\ 
\eqn
and
\bqn\label{26}
F_{t,(w,v)}(0,0,0)[\phi,\psi]=\left(\begin{array}{c} 0\\
-Z_2\left(0, \Psi\right)\end{array}\right)\ .
\eqn
Before analyzing $L:=F_{(w,v)}(0,0,0)\in\ml(\E_1\times\E_1,\E_1\times\E_1)$ further, let us observe, as in \cite{BlatBrown}, that the operator $$-\Delta_D+r-\eta+2\alpha_1 U_\eta\in\ml(\Wqb^2,L_q)$$ is invertible. Indeed, from \eqref{16} it follows that $U_\eta$ is an eigenfunction of $-\Delta_D+r-\eta+\alpha_1 U_\eta$ corresponding to the eigenvalue $0$, that is, $\lambda_1 \big(r-\eta+\alpha_1 U_\eta\big)=0$. But then, by the monotonicity of the principal eigenvalue \cite[Thm.16]{AmannPrinc},
$$
\lambda_1\big(r-\eta+2\alpha_1 U_\eta\big)>\lambda_1\big(r-\eta+\alpha_1 U_\eta\big)=0
$$ 
and so $0$ belongs to the resolvent set of the operator $-\Delta_D+r-\eta+2\alpha_1 U_\eta$. 

We set $\mathcal{R}:= \big(-\Delta_D+r-\eta+2\alpha_1 U_\eta\big)^{-1}$.

\begin{lem}\label{L5}
Let $\Psi_1$ be a strongly positive eigenfunction to the principal eigenvalue $\xi_0=\xi_0(\eta)$ from \eqref{A} and let $\Phi_1:=\alpha_2 \mathcal{R}(P_\eta\Psi_1)$. Then $\mathrm{dim}(\mathrm{ker}(L))=\mathrm{codim}(\mathrm{rg}(L))=1$. In fact, $\mathrm{ker}(L)=\mathrm{span}\{(z_1^*,z_2^*)\}$, where
$$
z_1^*:=Z_1\left(-\alpha_1\Phi_1 u_\eta+\alpha_2\Psi_1p_\eta\,,\,\eta\Phi_1\right)\in \E_1 \ ,\quad z_2^*:=Z_2\big(0,\xi_0(\eta)\Psi_1\big)\in \E_1\ .
$$
\end{lem}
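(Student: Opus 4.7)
The plan is to reduce everything to elliptic problems for the weighted moments $\Phi,\Psi$ defined in \eqref{17}, exploiting the identity $\int_0^\infty e^{-\tau a}\partial_a u\,\rd a=-u(0)+\tau\int_0^\infty e^{-\tau a}u\,\rd a$ recorded before Lemma~\ref{L1}. The kernel is then pinned down by simplicity of the principal eigenvalue $\xi_0(\eta)$ from \eqref{A} together with the invertibility of $-\Delta_D+r-\eta+2\alpha_1 U_\eta$ already observed in the excerpt, and the codimension claim follows from a Fredholm-alternative argument.

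For $\kk(L)$: if $L[\phi,\psi]=0$, I first attack the second component, which reads $\psi=Z_2(0,\xi_0\Psi)$, i.e.\ $\partial_a\psi+(-\Delta_D-\beta_2P_\eta)\psi=0$ with $\psi(0)=\xi_0\Psi$. Testing against $e^{-sa}$ and integrating in $a$ yields $(-\Delta_D+s-\beta_2P_\eta)\Psi=\xi_0\Psi$, so simplicity of $\xi_0(\eta)$ forces $\Psi=c\Psi_1$ for some $c\in\R$, and linearity of $Z_2$ gives $\psi=cz_2^*$. Feeding $\Psi=c\Psi_1$ into the first component and integrating against $e^{-ra}$ produces
$$
(-\Delta_D+r-\eta+2\alpha_1U_\eta)\Phi=\alpha_2 c\,P_\eta\Psi_1,
$$
so $\Phi=c\alpha_2\mathcal{R}(P_\eta\Psi_1)=c\Phi_1$ and hence $\phi=cz_1^*$ by linearity of $Z_1$. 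For closure I verify the consistency identities $\int_0^\infty e^{-ra}z_1^*\,\rd a=\Phi_1$ and $\int_0^\infty e^{-sa}z_2^*\,\rd a=\Psi_1$, which come out by running the same integration trick on $z_j^*$ and invoking the defining equations of $\Phi_1$ and $\Psi_1$. This pins down $\kk(L)=\spann\{(z_1^*,z_2^*)\}$.

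For $\codim\,\im(L)$, I write $L=I-T$ with
$$
T[\phi,\psi]:=\bigl(Z_1(-\alpha_1\Phi u_\eta+\alpha_2\Psi p_\eta,\,\eta\Phi),\,Z_2(0,\xi_0\Psi)\bigr),
$$
and prove that $T$ is compact on $\E_1\times\E_1$. The map $(\phi,\psi)\mapsto(\Phi,\Psi)$ is continuous into $\Wqb^2\times\Wqb^2$, which embeds compactly into $L_q\times L_q$ by Rellich--Kondrachov; together with the exponential decay of $u_\eta,p_\eta$ in $a$ (both built from the contracting semigroup $e^{(\Delta_D-\alpha_1U_\eta)a}$, whose generator has negative spectral bound by Lemma~\ref{L4}), this compactness combined with the crude estimate $\|\Phi u_\eta(a)\|_{L_q}\le \|u_\eta(a)\|_\infty\|\Phi\|_{L_q}$ renders the multiplications $\Phi\mapsto\Phi u_\eta$ and $\Psi\mapsto\Psi p_\eta$ compact into $\E_0$. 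Composing with the bounded operators $Z_1,Z_2$ and the compact trace maps $\Phi\mapsto\eta\Phi$, $\Psi\mapsto\xi_0\Psi$ into $\Wqb^{2-2/q}$ yields compactness of $T$, so $L$ is Fredholm of index zero and $\codim\,\im(L)=\dimens\,\kk(L)=1$.

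The main obstacle I expect is precisely the compactness of $T$, which requires marrying the Rellich-type compact embedding in the spatial variable with the integrable exponential tail in age needed to control the $L_q(\R^+,L_q)$-norm; the rest is routine bookkeeping around the integration-by-parts identity for $\partial_a$.
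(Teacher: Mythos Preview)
Your proposal is correct and follows essentially the same route as the paper: both reduce the kernel computation to the elliptic equations for the moments $(\Phi,\Psi)$, use simplicity of $\xi_0$ together with invertibility of $-\Delta_D+r-\eta+2\alpha_1U_\eta$ to force $(\Phi,\Psi)=c(\Phi_1,\Psi_1)$, verify the consistency identities, and then obtain $\codim\,\im(L)=1$ by writing $L=I-T$ with $T$ compact via the Rellich embedding $\Wqb^2\hookrightarrow L_q$. The only cosmetic difference is that you invoke the exponential decay of $u_\eta$ for the compactness of $\Phi\mapsto\Phi u_\eta$, whereas the paper (and you could too) just uses $u_\eta\in\E_1\hookrightarrow L_q(\R^+,\Wqb^2)\hookrightarrow L_q(\R^+,L_\infty)$ directly.
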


\begin{proof}
For $(\phi,\psi)\in\E_1\times\E_1$ set
\bqnn
T(\phi,\psi):=\left(\begin{array}{c} Z_1\left(-\alpha_1\Phi u_\eta +\alpha_2\Psi p_\eta\,,\,\eta \Phi\right)\\
Z_2\left(0, \xi_0\Psi\right)\end{array}\right) \ 
\eqnn
using convention \eqref{17}.
Since $\Phi,\Psi$ belong to $\Wqb^2$ which is compactly embedded in $L_q$, it is immediate by definition of the operators $Z_1$, $Z_2$ that $T\in\ml(\E_1\times\E_1)$ is compact. Suppose now that $(\phi,\psi)\in\mathrm{ker}(L)$. Then
\begin{align}
\partial_a \phi-\Delta_D \phi&=-\alpha_1U_\eta\phi-\alpha_1\Phi u_\eta+\alpha_2\Psi p_\eta\ , &\phi(0)=\eta \Phi\ ,\label{177}\\
\partial_a \psi-\Delta_D \psi&=\beta_2P_\eta\psi\ , &\psi(0)=\xi_0 \Psi\ ,\label{18}
\end{align}
whence
\begin{align}
(r-\eta)\Phi-\Delta_D \Phi+2\alpha_1U_\eta\Phi -\alpha_2P_\eta\Psi =0\ ,\label{20}\\
(s-\xi_0)\Psi-\Delta_D \Psi-\beta_2P_\eta\Psi=0\ .\label{21}
\end{align}
Since $\xi_0$ is a simple eigenvalue of $-\Delta_D+s-\beta_2P_\eta$, \eqref{21} implies that there is some $\kappa\in\R$ with \mbox{$\Psi=\kappa\Psi_1$}, and thus, by \eqref{20}, $\Phi=\kappa\Phi_1$. From \eqref{177}, \eqref{18} we then derive that $\mathrm{ker}(L)\subset\mathrm{span}\{(z_1^*,z_2^*)\}$. Conversely, let $(\phi,\psi):=(z_1^*,z_2^*)$. Then
\bqn\label{22}
\partial_a \phi-\Delta_D \phi=-\alpha_1 U_\eta\phi-\alpha_1\Phi_1u_\eta+\alpha_2\Psi_1 p_\eta\ , \quad\phi(0)=\eta \Phi_1\ ,
\eqn
and, on integrating with respect to $a$, we obtain
\bqn\label{23}
-\eta\Phi_1+r\Phi-\Delta_D \Phi+\alpha_1U_\eta\Phi =-\alpha_1U_\eta\Phi_1+\alpha_2P_\eta\Psi_1\ . 
\eqn
Clearly, $\Phi=\Phi_1$ solves \eqref{23} and if there be another solution, let $\hat{\Phi}$ denote the difference of the two solutions. Then
$$
-\Delta_D \hat{\Phi}+r\hat{\Phi}+\alpha_1U_\eta\hat{\Phi} =0\ ,
$$
from which
$$
\int_\Om\big\vert\nabla\hat{\Phi}\big\vert^2\,\rd x+r\int_\Om\hat{\Phi}^2\,\rd x +\alpha_1\int_\Om U_\eta\,\hat{\Phi}^2\,\rd x=0
$$
and so $\hat{\Phi}\equiv 0$ (alternatively, we could have invoked \eqref{16} and the monotonicity of the principal eigenvalue). Thus $\Phi=\Phi_1$ is the unique solution to \eqref{23}. Similarly, from the equation satisfied by $\psi=z_2^*$ it follows on integration that
\bqn\label{24}
-\xi_0\Psi_1+s\Psi-\Delta_D\Psi-\beta_2P_\eta\Psi=0\ ,
\eqn
which has the solution $\Psi=\Psi_1$. If $\hat{\Psi}$ denotes the difference to another solution, then
$$
s\hat{\Psi}-\Delta_D \hat{\Psi}-\beta_2P_\eta \hat{\Psi}=0
$$
implying $\hat{\Psi}\equiv 0$ since $\lambda_1(s-\beta_2 P_\eta)>0$. Thus, $\Psi=\Psi_1$ is the unique solution to \eqref{24}, and we conclude that $(z_1^*,z_2^*)\in \mathrm{ker}(L)$. In particular, we have shown that 
\bqn\label{25}
\int_0^\infty e^{-sa}\, z_2^*\,\rd a=\Psi_1\ .
\eqn
Finally, since $\mathrm{dim}(\mathrm{ker}(L))=1$ and $L=1-T$ with a compact operator $T$, the assertion follows.
\end{proof}

It remains to check the transversality condition of \cite{CrandallRabinowitz}.

\begin{lem}\label{L6}
We have $F_{t,(w,v)}(0,0,0)[z_1^*,z_2^*]\not\in\mathrm{rg}(L)$.
\end{lem}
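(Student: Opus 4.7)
My plan is to reduce the question to a Fredholm solvability obstruction for the elliptic operator $-\Delta_D + s - \beta_2 P_\eta - \xi_0$, which by \eqref{A} has $\Psi_1$ in its kernel.

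First I would combine the formula \eqref{26} with the identity \eqref{25} established in the proof of Lemma~\ref{L5} to obtain
$$F_{t,(w,v)}(0,0,0)[z_1^*,z_2^*] = \bigl(0,\, -Z_2(0,\Psi_1)\bigr).$$
Suppose, for a contradiction, that this element lies in $\mathrm{rg}(L)$, so that there exist $(\phi,\psi) \in \E_1 \times \E_1$ with $L(\phi,\psi) = (0, -Z_2(0,\Psi_1))$. From the second component of \eqref{27} and linearity of $Z_2$, I read off that $\psi$ satisfies
$$\partial_a \psi - \Delta_D \psi - \beta_2 P_\eta \psi = 0, \qquad \psi(0) = \xi_0 \Psi - \Psi_1.$$
Mirroring the passage from \eqref{18} to \eqref{21}, multiplying by $e^{-sa}$ and integrating over $(0,\infty)$ in $a$ (using $\int_0^\infty e^{-sa} \partial_a \psi\,\rd a = -\psi(0) + s\Psi$) then yields the elliptic identity
$$(-\Delta_D + s - \beta_2 P_\eta - \xi_0)\Psi = -\Psi_1.$$

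The key step is the resulting Fredholm obstruction. The operator $-\Delta_D + s - \beta_2 P_\eta - \xi_0$ is self-adjoint on $L_2(\Om)$ (with Dirichlet domain containing both $\Psi$ and $\Psi_1$), its kernel is spanned by $\Psi_1$, and $\Psi_1$ is strongly positive. Pairing the last equation with $\Psi_1$ in $L_2(\Om)$ and moving the operator onto $\Psi_1$ via integration by parts produces $0$ on the left, whereas the right-hand side equals $-\int_\Om \Psi_1^2\,\rd x < 0$, a contradiction. This forces $F_{t,(w,v)}(0,0,0)[z_1^*,z_2^*] \notin \mathrm{rg}(L)$. I do not anticipate a real obstacle; the only subtlety to verify is that the $a$-integration leading to the elliptic equation is justified by the regularity $\psi \in \E_1$ and the embedding \eqref{emb}, so that the Fredholm alternative can indeed be invoked on the elliptic level.
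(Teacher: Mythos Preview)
Your argument is correct and follows essentially the same route as the paper: compute $F_{t,(w,v)}(0,0,0)[z_1^*,z_2^*]=(0,-Z_2(0,\Psi_1))$ via \eqref{25}--\eqref{26}, assume it lies in $\mathrm{rg}(L)$, integrate the resulting parabolic equation for $\psi$ against $e^{-sa}$ to pass to the elliptic level, and then test with $\Psi_1$ to obtain the contradiction $0=\int_\Om\Psi_1^2\,\rd x$. The only cosmetic difference is that you frame the last step as a Fredholm solvability obstruction for the self-adjoint operator $-\Delta_D+s-\beta_2P_\eta-\xi_0$, whereas the paper performs the same pairing directly without naming it.
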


\begin{proof}
From \eqref{17}, \eqref{26}, \eqref{25}, and Lemma~\ref{L4} it follows
$$
F_{t,(w,v)}(0,0,0)[z_1^*,z_2^*]=
\left(\begin{array}{c} 0\\
-Z_2\left(0, \Psi_1\right)\end{array}\right)\ .
$$
Suppose then to the contrary that the assertion is false. Then, by \eqref{27}, there is some $\psi\in\E_1$ satisfying \mbox{$\psi-Z_2(0,\xi_0\Psi)=-Z_2(0,\Psi_1)$}, that is,
$$
\partial_a\psi-\Delta_D\psi-\beta_2P_\eta\psi=0\ ,\quad \psi´(0)=\xi_0\Psi-\Psi_1\ .
$$
Integration with respect to $a$ and testing the resulting elliptic equation with $\Psi_1$ yields
\bqnn
\begin{split}
0&=(s-\xi_0)\int_\Om\Psi\Psi_1\,\rd x+\int_\Om\Psi_1^2\,\rd x-\int_\Om\Psi_1\Delta_D\Psi\,\rd x-\beta_2\int_\Om P_\eta\Psi\Psi_1\,\rd x\\
&=\int_\Om\Psi\big((s-\xi_0)\Psi_1 -\Delta_D\Psi_1 - \beta_2 P_\eta\Psi_1\big)\,\rd x+ \int_\Om\Psi_1^2\,\rd x=\int_\Om\Psi_1^2\,\rd x\ ,
\end{split}
\eqnn
contradicting the positivity of $\Psi_1$. 
\end{proof}

Recall that $\xi_0(\eta)$ is the first eigenvalue of $-\Delta_D+s-\beta_2 P_\eta$. If $\xi$ is regarded as bifurcation parameter in \eqref{4a}-\eqref{5b}, then we obtain in summary the following result:

\begin{thm}\label{T1}
Let $\alpha_j$, $\beta_j$, and $m$ be positive.
 
(a) Besides the trivial solutions $(\xi,u,v)=(\xi,0,0)$ there is a semi-trivial branch of nonnegative classical solutions
$
\mathfrak{C}=\{(\xi,0,v_\xi)\,;\,\lambda_1+s<\xi<\lambda_1+s+\ve_0\}
$
for some $\ve_0>0$, where $v_\xi\not\equiv 0$. There is no nonnegative solution $(\xi,u,v)$ with $u\not\equiv 0$ if $\eta\le\lambda_1+r$.

(b) There is some $\ve_0'>0$ such that, if $\eta\in (\lambda_1+r,\lambda_1+r+\ve_0')$, then in addition to $\mathfrak{C}$ there is another semi-trivial branch $\mathfrak{C}_\eta=\{(\xi,u_\eta,0)\,;\,\xi\ge 0\}$ of nonnegative classical solutions to \eqref{4a}-\eqref{5b},
where $(\eta,u_\eta)\not\equiv (\eta,0)$ solves \eqref{4a}, \eqref{4b} with $V\equiv 0$. Moreover, provided that $\frac{\beta_2(\eta-r)}{\alpha_1+m(\eta-r)}$ is sufficiently small, in particular less than $\lambda_1+s$, local supercritical bifurcation of a branch of positive classical solutions occurs at the critical point $(\xi_0(\eta),u_\eta,0)\in \mathfrak{C}_\eta$. That is, there are $\ve_\eta>0$ and a branch of solutions 
$$
\mathfrak{C}_*:=\{(\xi,u_*,v_*)\,;\,\xi_0(\eta)<\xi<\xi_0(\eta)+\ve_\eta\}
$$
with $(u_*,v_*)\ge 0$ and $u_*\not\equiv 0$, $v_*\not\equiv 0$.
\end{thm}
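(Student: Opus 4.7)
The strategy is to piece together parts (a) and (b) from the preparatory lemmas, with the main analytical content of (b) coming from a direct application of the Crandall-Rabinowitz theorem \cite[Thm.1.7]{CrandallRabinowitz}. Part (a) is essentially a recapitulation of earlier work: the trivial branch is obvious, the semi-trivial branch $\mathfrak{C}$ is exactly what Lemma~\ref{L2}(a) produces (after substituting $U\equiv 0$ into \eqref{5a}, \eqref{5b}), and the non-existence statement for $\eta\le\lambda_1+r$ is the first assertion of Lemma~\ref{L1}.

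For part (b), the semi-trivial branch $\mathfrak{C}_\eta$ is a direct consequence of Lemma~\ref{L2}(b). The core bifurcation statement I plan to obtain by applying \cite[Thm.1.7]{CrandallRabinowitz} to the operator $F:\R\times\Sigma\times\Sigma\to\E_1\times\E_1$ built after \eqref{13}; all hypotheses are in place. Indeed, Lemma~\ref{L4} makes $Z_1,Z_2$ well-defined in $\ml(\E_0\times\Wqb^{2-2/q},\E_1)$; the $C^1$-smoothness of $F$ and the formulas \eqref{27}, \eqref{26} for the linearizations were checked before Lemma~\ref{L5}; the Fredholm data $\dimens\kk L=\codim\im L=1$ with $\kk L=\spann\{(z_1^*,z_2^*)\}$ come from Lemma~\ref{L5}; and the transversality condition $F_{t,(w,v)}(0,0,0)[z_1^*,z_2^*]\notin\im L$ is Lemma~\ref{L6}. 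Crandall-Rabinowitz then delivers a $C^1$ curve
\[
\sigma\mapsto\bigl(t(\sigma),\,\sigma z_1^*+o(\sigma),\,\sigma z_2^*+o(\sigma)\bigr)
\]
of zeros of $F$ through $(0,0,0)$ with $t(0)=0$, and setting $u_*:=u_\eta-w(\sigma)$, $v_*:=v(\sigma)$, $\xi:=\xi_0(\eta)+t(\sigma)$ furnishes the candidate branch $\mathfrak{C}_*$.

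The remaining work is to check $(u_*,v_*)\ge 0$ and $t(\sigma)>0$ for small $\sigma>0$. For $u_*$, the strong positivity of $u_\eta$ for $a>0$ together with the embedding \eqref{emb} dominates $w(\sigma)\to 0$ in $\E_1$, so $u_\eta-w(\sigma)\ge 0$ once $\sigma$ is small. For $v_*$, I would use that $z_2^*$ itself is strongly positive: $z_2^*$ solves $\partial_a z_2^*-\Delta_D z_2^*=\beta_2 P_\eta z_2^*$ with $z_2^*(0)=\xi_0(\eta)\Psi_1>0$, and strong positivity of the underlying analytic semigroup together with the strong positivity of $\Psi_1$ gives $z_2^*(a)>0$ for every $a\ge 0$, whence $v_*=\sigma z_2^*+o(\sigma)\ge 0$ for $\sigma>0$ small. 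Supercriticality then comes for free from Lemma~\ref{L3}: its last assertion forces $\xi>\xi_0(\eta)$ for any nonnegative solution with $v\not\equiv 0$, hence $t(\sigma)>0$ along the bifurcating branch.

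The principal obstacle in executing this plan is not the Crandall-Rabinowitz step itself but rather the preparatory Lemmas~\ref{L4}-\ref{L6}: securing the maximal $L_q$-regularity of $-\Delta_D-\beta_2 P_\eta$ (where the smallness hypothesis on $\beta_2(\eta-r)(\alpha_1+m(\eta-r))^{-1}$ enters decisively through the bounded-imaginary-power perturbation theorem), pinning down the one-dimensional kernel via the simplicity of $\xi_0(\eta)$, and verifying the transversality by integration against $\Psi_1$. Once those are in hand, the closing positivity and supercriticality verification is routine.
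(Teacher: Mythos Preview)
Your overall strategy coincides with the paper's: part~(a) from Lemma~\ref{L1} and Lemma~\ref{L2}, the semi-trivial branch $\mathfrak{C}_\eta$ in~(b) from Lemma~\ref{L2}(b), the bifurcation via Crandall--Rabinowitz applied to $F$ using Lemmas~\ref{L4}--\ref{L6}, and supercriticality from Lemma~\ref{L3}. That scaffolding is correct and is exactly what the paper does.

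However, your positivity arguments contain a genuine gap. For $u_*$ you claim that strong positivity of $u_\eta(a)$ for $a>0$ together with $w(\sigma)\to 0$ in $\E_1\hookrightarrow BUC(\R^+,C^1(\bar\Om))$ forces $u_\eta-w(\sigma)\ge 0$. This does not follow: $u_\eta(a)=\eta\, e^{(\Delta_D-\alpha_1U_\eta)a}U_\eta$ decays exponentially to $0$ as $a\to\infty$, so its distance to the boundary of the positive cone in $C^1(\bar\Om)$ tends to $0$, and a uniform-in-$a$ perturbation by a fixed small amount need not keep $u_\eta(a)-w(\sigma)(a)$ nonnegative for all large $a$. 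The same objection applies to $v_*$: the positive cone $\E_1^+$ has empty interior (it sits inside $L_q(\R^+,L_q^+)$), so even though $z_2^*(a)>0$ for each $a$, the conclusion $\sigma z_2^*+o(\sigma)\ge 0$ in $\E_1$ is not an open-cone perturbation argument and does not come for free.

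The paper circumvents this by working at the trace level $a=0$, where the positive cone of $\Wqb^{2-2/q}\hookrightarrow C^1(\bar\Om)$ \emph{does} have nonempty interior. One computes
\[
u_*(0)=u_\eta(0)-\ve\eta\Phi_1-\ve\,\gamma_0\Theta_1(\ve)\ ,\qquad
v_*(0)=\ve\,\xi_0(\eta)\Psi_1+\ve\,\gamma_0\Theta_2(\ve)\ ;
\]
since $u_\eta(0)=\eta U_\eta$ and $\Psi_1$ are strongly positive in $\Wqb^{2-2/q}$ while $\gamma_0\Theta_j(\ve)\to 0$ there, both traces lie in the positive cone for $\ve>0$ small. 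Positivity for all $a>0$ then follows by feeding these nonnegative initial data into the parabolic equations \eqref{4a} and \eqref{5a} (linear equations with bounded coefficients once $U_*,V_*$ are frozen; positivity of the evolution). With this repair your argument matches the paper's.
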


\begin{proof}
Part (a) is a consequence of Lemma ~\ref{L1} and Lemma~\ref{L2}. For (b) we fix $(\eta,u_\eta)\in\mathcal{U}$ as before and consider a solution $(u,v)=(u_\eta-w,v)$. Then Lemma~\ref{L5}, Lemma~\ref{L6}, and \cite[Thm.1.7]{CrandallRabinowitz} imply that $(\xi_0(\eta),0,0)$ is a bifurcation point of \eqref{12}, \eqref{13} and close to this point the nontrivial solutions $(w,v)$ lie on the curve (for some $\ve_\eta>0$)
$$
\big(\xi(\ve),\ve z_1^*+\ve\Theta_1(\ve),\ve z_2^*+\ve\Theta_2(\ve)\big)\ ,\quad \vert\ve\vert<\ve_\eta\ ,
$$
where $\xi:(-\ve_\eta,\ve_\eta)\rightarrow \R$ is continuous with $\xi(0)=\xi_0(\eta)$ and $\Theta=(\Theta_1,\Theta_2):(-\ve_\eta,\ve_\eta)\rightarrow\E_1\times\E_1$ is continuous with $\Theta(0,0)=(0,0)$.
Therefore, in terms of $(u,v)$ we obtain that $(\xi_0(\eta),u_\eta,0)$ is a bifurcation point of \eqref{4a}-\eqref{5b} and close to this point the solutions lie on the curve
$$
\big(\xi(\ve),u_\eta-\ve z_1^*-\ve\Theta_1(\ve),\ve z_2^*+\ve\Theta_2(\ve)\big)\ ,\quad \vert\ve\vert<\ve_\eta\ .
$$
Let $\ve\in (0,\ve_\eta)$ be fixed and set $u_*:=u_\eta-\ve z_1^*-\ve\Theta_1(\ve)$ and $v_*:=\ve z_2^*+\ve\Theta_2(\ve)$. Then, by definition of $z_j^*$,
$$
u_*(0)=u_\eta(0)-\ve\eta\Phi_1-\ve\gamma_0\Theta_1(\ve)\ ,\qquad
v_*(0)=\ve\xi_0(\eta)\Psi_1+\ve\gamma_0\Theta_2(\ve)\ .
$$
Clearly, $\Psi_1$ belongs to the positive cone of $\Wqb^{2-2/q}$ and thus, since $\gamma_0\Theta_2\in C\big((-\ve_\eta,\ve_\eta),\Wqb^{2-2/q}\big)$ and $\xi_0(\eta)>0$, we have $v_*(0)\ge 0$ provided that $\ve>0$ is sufficiently small. This yields that $v_*$ satisfies \eqref{5a}, \eqref{5b} and is positive. As for the positivity of $u_*$ we note that $u_\eta(0)=\eta U_\eta$ with $U_\eta$ being strongly positive and so is $u_\eta(0)$.  Thus, if $\ve>0$ is sufficiently small, we deduce the positivity of $u_*(0)$, whence of $u_*$ by \eqref{4a}, \eqref{4b}. That necessarily $\xi>\xi_0(\eta)$ was shown in Lemma~\ref{L3}. Finally, standard regularity theory for semilinear parabolic equations implies that both $u_*$, $v_*$ are classical solution to \eqref{4a}-\eqref{5b}, i.e. $u_*,v_*$ belong to $C(\R^+\times\bar{\Om})\cap C^{1,2}((0,\infty)\times\bar{\Om})$.
\end{proof}

\begin{rem}
We shall point out that while global bifurcation results are shown in \cite{BlatBrown} for \eqref{6}, \eqref{7}, our bifurcation results for \eqref{4a}-\eqref{5b} are of purely local character. This is due to a lack of compactness of, e.g., the map $\E_1\times\E_1\rightarrow \E_0, (u,v)\mapsto Uv$ with respect to the age variable $a$.
\end{rem}

\subsection{Bifurcation for the Parameter $\eta$}

We now consider $\eta$ as bifurcation parameter in \eqref{4a}-\eqref{5b} and keep $\xi$ fixed. Let $(\xi,v_\xi)\in\mathcal{V}$ from Lemma~\ref{L2} be fixed and set $V_\xi:=\int_0^\infty e^{-sa}\,v_\xi(a)\rd a$. Then there is a branch of semi-trivial solution
$$
\mathcal{D}_\xi:=\big\{(\eta,0,v_\xi)\,,\,\eta\ge 0\big\}\ .
$$
The goal is to prove that bifurcation of positive solutions occurs from this branch. Since the idea is exactly the same as in the previous subsection, we merely sketch the proof and omit details. Proceeding as before we suppose that $(\eta,u,v)=(\eta_0+t,u,v_\xi+w)$ solves \eqref{4a}-\eqref{5b} with $\eta_0=\eta_0(\xi)$ to be determined. Then
the analogues to \eqref{12}, \eqref{13} read
\begin{align}
\partial_a u-\Delta_D u&=-\alpha_1Uu-\alpha_2\frac{(V_\xi+W)u}{1+mU}\ ,&u(0)=(\eta_0+t) U\ ,\label{100}\\
\partial_a w-\Delta_D w&=-\beta_1Wv_\xi-\beta_1(V_\xi+W)w+\beta_2\frac{U(v_\xi+w)}{1+mU}\ ,&w(0)=\xi W\ ,\label{101}
\end{align}
where
\bqnn
U:=\int_0^\infty e^{-ra}\, u(a)\,\rd a\ ,\qquad  W:=\int_0^\infty e^{-sa}\,w(a)\,\rd a\ .
\eqnn
As in Lemma~\ref{L4} we derive that the operators $-\Delta_D+\alpha_2 V_\xi$ and  $-\Delta_D+\beta_2 V_\xi$ have maximal $L_q$-regularity on $\R^+$, i.e.,
\begin{align*}
&S_1:=\left(\partial_a-\Delta_D+\alpha_2 V_\xi,\gamma_0\right)^{-1}\in\ml(\E_0\times \Wqb^{2-2/q},\E_1)\ ,\\
&S_2:=\left(\partial_a-\Delta_D+\beta_1 V_\xi,\gamma_0\right)^{-1}\in\ml(\E_0\times \Wqb^{2-2/q},\E_1)\ 
\end{align*}
are well-defined (note that we do not impose any restriction on the coefficients in this case). Thus, solutions of 
\eqref{100}, \eqref{101} are the zeros of
$$
G(t,u,w):=\left(\begin{array}{c} u-S_1\left(\alpha_2V_\xi u-\alpha_1Uu-\alpha_2\frac{(V_\xi+W)u}{1+mU}\,,\,(\eta_0+t) U\right)\\
w-S_2\left(-\beta_1W(v_\xi+w)+\beta_2\frac{U(v_\xi+w)}{1+mU}\,,\, \xi W\right)\end{array}\right)\ .
$$
Linearizing around $(t,u,w)=(0,0,0)$ gives for $(\phi,\psi)\in\E_1\times\E_1$ with \eqref{17}:
\bqnn
G_{(u,w)}(0,0,0)[\phi,\psi]=\left(\begin{array}{c} \phi-S_1\left(0, \eta_0\Phi\right)\\
\psi-S_2\left(\beta_2\Phi v_\xi-\beta_1\Psi v_\xi\,,\, \xi\Psi\right)\end{array}\right)\ 
\eqnn
and
\bqnn
G_{t,(u,w)}(0,0,0)[\phi,\psi]=\left(\begin{array}{c} -S_1\left(0, \Phi\right)\\
0\end{array}\right)\ .
\eqnn
Thus, if $(\phi,\psi)\in\mathrm{ker}(\tilde{L})$ with $\tilde{L}:=G_{(u,w)}(0,0,0)$, then
$$
\phi=S_1(0,\eta_0\Phi)\ ,\quad \psi=S_2(\beta_2\Phi v_\xi-\beta_1\Psi v_\xi\,,\, \xi \Psi)
$$
and, on integrating with respect to $a$,
$$
-\Delta_D\Phi+\alpha_2 V_\xi\Phi=(\eta_0-r)\Phi\ ,\qquad -\Delta_D\Psi+(2\beta_1 V_\xi-\xi+s)\Psi=\beta_2 V_\xi \Phi\ .
$$
But then, if $\eta_0=\eta_0(\xi)$ is the principal eigenvalue of $-\Delta_D+r+\alpha_2 V_\xi$ and $\tilde{\Phi}_1$ a corresponding strongly positive eigenfunction, then we derive as in the proof of Lemma~\ref{L5} that the kernel of $\tilde{L}$ is one-dimensional and spanned by $(s_1^*,s_2^*)\in \E_1\times\E_1$, where
$$
s_1^*:=S_1(0,\eta_0\tilde{\Phi}_1)\ ,\qquad s_2^*:=S_2(-\beta_1\tilde{\Psi}_1v_\xi+\beta_2\tilde{\Phi}_1 v_\xi\,,\,\xi\tilde{\Psi}_1)\ ,
$$
and $\tilde{\Psi}_1:=\big(-\Delta_D+2\beta_1 V_\xi-\xi+s\big)^{-1}(\beta_2 V_\xi \tilde{\Phi}_1)$. Also, the codimension of the range of $\tilde{L}$ equals one. Analogously to the proof of Lemma~\ref{L6} we deduce that
$$
G_{t,(u,w)}(0,0,0)[s_1^*,s_2^*]=
\left(\begin{array}{c} 
-S_1\big(0, \tilde{\Phi}_1\big)\\
0\end{array}\right)\
$$
does not belong to the range of $\tilde{L}$. Therefore, we are again in a position to apply \cite[Thm.1.7]{CrandallRabinowitz}. Recalling Lemma~\ref{L1} and Lemma~\ref{L2} we obtain the following analogue of Theorem~\ref{T1} for bifurcation with respect to the parameter $\eta$:

\begin{thm}\label{T2}
Let $\alpha_j$, $\beta_j$, and $m$ be positive.

(a) Besides the trivial solutions $(\eta,u,v)=(\eta,0,0)$ there is a semi-trivial branch of nonnegative classical solutions
$
\mathfrak{D}=\{(\eta,u_\eta,0)\,;\,\lambda_1+r<\eta<\lambda_1+r+\ve_0'\}
$
for some $\ve_0'>0$, where $u_\eta\not\equiv 0$. There is no nonnegative solution $(\eta,u,v)$ with $v\not\equiv 0$ if $\xi\le\lambda_1+s-\beta_2/m$.

(b) There is some $\ve_0>0$ such that, if $\xi\in (\lambda_1+s,\lambda_1+s+\ve_0)$, then in addition to $\mathfrak{D}_1$ there is another semi-trivial branch $\mathfrak{D}_\xi=\{(\eta,0,v_\xi)\,;\,\eta\ge 0\}$ of nonnegative classical solutions to \eqref{4a}-\eqref{5b},
where $(\xi,v_\xi)\not\equiv (\xi,0)$ solves \eqref{5a}, \eqref{5b} with $U\equiv 0$. Moreover, a local branch of positive classical solutions $(\eta,u_*,v_*)$ bifurcates from the critical point $(\eta_0(\xi),0,v_\xi)\in \mathfrak{D}_\xi$
with $(u_*,v_*)\ge 0$ and $u_*\not\equiv 0$, $v_*\not\equiv 0$.
\end{thm}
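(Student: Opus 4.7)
The plan is to follow the blueprint of Theorem~\ref{T1} and apply \cite[Thm.1.7]{CrandallRabinowitz} to the mapping $G$ introduced in the text, with bifurcation parameter $t=\eta-\eta_0(\xi)$, evaluated at $(0,0,0)$. Part~(a) is essentially already in hand: the trivial branch is obvious, $\mathfrak{D}$ comes from Lemma~\ref{L2}(b), and the non-existence claim for $\xi\le\lambda_1+s-\beta_2/m$ is Lemma~\ref{L1}. For part~(b) I fix $(\xi,v_\xi)\in\mathcal{V}$, producing $\mathfrak{D}_\xi$, and set $\eta_0(\xi):=\lambda_1(r+\alpha_2 V_\xi)$ with a strongly positive eigenfunction $\tilde{\Phi}_1$.

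A preliminary step is to check that $\tilde{\Psi}_1=(-\Delta_D+2\beta_1 V_\xi-\xi+s)^{-1}(\beta_2 V_\xi\tilde{\Phi}_1)$ is well-defined. This is the exact analogue of the invertibility argument preceding Lemma~\ref{L5}: the identity \eqref{7} for $v_\xi$ yields $(-\Delta_D+\beta_1 V_\xi-\xi+s)V_\xi=0$, so the strongly positive $V_\xi$ is a principal eigenfunction with eigenvalue $0$, and strict monotonicity of the principal eigenvalue \cite[Thm.16]{AmannPrinc} places $0$ in the resolvent set of the perturbed operator $-\Delta_D+2\beta_1 V_\xi-\xi+s$. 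With this in hand, $C^1$-smoothness of $G$ on $\R\times\Sigma\times\Sigma$ is established exactly as for $F$; the kernel computation is the one sketched in the text, and writing $\tilde{L}=I-\tilde{T}$ with $\tilde{T}$ compact (via the embedding $\Wqb^2\hookrightarrow L_q$ inside $S_1,S_2$) gives $\mathrm{codim}(\mathrm{rg}(\tilde{L}))=\mathrm{dim}(\mathrm{ker}(\tilde{L}))=1$. For transversality I would mimic Lemma~\ref{L6}: the assumption $\tilde{L}[\phi,\psi]=(-S_1(0,\tilde{\Phi}_1),0)$ reduces, by the standard integration in $a$, to an elliptic identity which, after testing against $\tilde{\Phi}_1$ and using $(-\Delta_D+r+\alpha_2 V_\xi-\eta_0)\tilde{\Phi}_1=0$ together with the self-adjointness of $-\Delta_D+\alpha_2 V_\xi$, collapses to $\|\tilde{\Phi}_1\|_{L_2}^2=0$, a contradiction.

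Crandall--Rabinowitz then produces a local curve $(t(\ve),\ve s_1^*+\ve\tilde{\Theta}_1(\ve),\ve s_2^*+\ve\tilde{\Theta}_2(\ve))$ with $t(0)=0$ and $\tilde{\Theta}(0)=(0,0)$. Translating back, $u_*=\ve s_1^*+\ve\tilde{\Theta}_1(\ve)$ and $v_*=v_\xi+\ve s_2^*+\ve\tilde{\Theta}_2(\ve)$. For positivity, $u_*(0)=\ve\eta_0\tilde{\Phi}_1+\ve\gamma_0\tilde{\Theta}_1(\ve)$; since $\tilde{\Phi}_1$ lies in the interior of the positive cone of $\Wqb^{2-2/q}\hookrightarrow C^1(\bar{\Om})$, continuity of $\gamma_0\tilde{\Theta}_1$ forces $u_*(0)$ to be strongly positive for small $\ve>0$, and the strongly positive parabolic semigroup propagates this to $u_*(a)>0$ for $a>0$. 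For $v_*$ one uses that $v_\xi$ itself is strongly positive at every age $a>0$ (since $v_\xi(0)=\xi V_\xi$ is strongly positive in $\Wqb^{2-2/q}$), so a perturbation of order $\ve$ in $C(\R^+\times\bar{\Om})$ preserves positivity. Standard regularity for semilinear parabolic equations upgrades $(u_*,v_*)$ to $C(\R^+\times\bar{\Om})\cap C^{1,2}((0,\infty)\times\bar{\Om})$. I expect the main obstacle to be the transversality step, since this is the only place where a genuine spectral cancellation is required; however, by contrast with Theorem~\ref{T1} no smallness condition on the coefficients is needed, because the potentials $\alpha_2 V_\xi$ and $\beta_1 V_\xi$ that enter $S_1,S_2$ are nonnegative, so maximal $L_q$-regularity holds unconditionally.
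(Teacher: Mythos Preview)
Your proposal is correct and follows the same route as the paper's own (sketched) proof: fix $(\xi,v_\xi)\in\mathcal{V}$, verify maximal $L_q$-regularity for $S_1,S_2$ without smallness assumptions thanks to the nonnegative potentials $\alpha_2 V_\xi,\beta_1 V_\xi$, compute the one-dimensional kernel of $\tilde{L}=G_{(u,w)}(0,0,0)$ via the principal eigenvalue $\eta_0(\xi)=\lambda_1(r+\alpha_2 V_\xi)$ and the invertibility of $-\Delta_D+2\beta_1 V_\xi-\xi+s$, check transversality by testing against $\tilde{\Phi}_1$, and invoke Crandall--Rabinowitz. The only minor looseness is in the positivity of $v_*$: rather than a direct $C(\R^+\times\bar{\Om})$ perturbation of $v_\xi$ (which vanishes on $\partial\Om$ and at $a=\infty$), argue as you do for $u_*$ via the initial trace $v_*(0)=\xi V_\xi+O(\ve)$ in $\Wqb^{2-2/q}$, whose positive cone has nonempty interior, and then propagate by the parabolic comparison principle.
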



\end{document}